\newtheorem{theorem}{Theorem}[section]
\newtheorem{proposition}[theorem]{Proposition}
\newtheorem{lemma}[theorem]{Lemma}
\newtheorem{definition}[theorem]{Definition}
\newtheorem{remark}[theorem]{Remark}
\newtheorem{remarks}[theorem]{Remarks}
\newcommand{\CC}{\ensuremath{\mathbb{C}}} 
\newcommand{\NN}{\ensuremath{\mathbb{N}}} 
\newcommand{\defeq}{:=} 
\newcommand{\integ}[3][]{\ensuremath{\int_{#1}#2\,d#3}} 
\newcommand{\id}{\mathrm{id}} 
\newcommand{\Ad}{\mathop{\mathrm{Ad}}} 
\newcommand{\inv}{^{-1}} 
\newcommand{\nrm}[1][\cdot]{\ensuremath{\Vert {#1} \Vert}} 
\newcommand{\dualp}[2]{\left\langle #1 , #2 \right\rangle} 
\newcommand{\HH}{\ensuremath{\mathcal{H}}} 
\newcommand{\ip}[2]{\left\langle #1 , #2 \right\rangle} 
\newcommand{\grp}{G} 
\newcommand{\lreg}{\lambda} 
\newcommand{\rreg}{\rho} 
\newcommand{\mul}{\mathrm{m}} 
\newcommand{\lrega}{\lambda} 
\newcommand{\dgrp}[1][\grp]{\hat{#1}} 
\newcommand{\Aut}{\mathop{\mathrm{Aut}}} 
\newcommand{\falg}[1][G]{\ensuremath{A(#1)}} 
\newcommand{\FSalg}[1][\grp]{B(#1)} 
\newcommand{\CA}{$C^*$-algebra}
\newcommand{\vNA}{von Neumann algebra}
\newcommand{\Bd}[1][\HH]{\ensuremath{\mathcal{B}(#1)}} 
\newcommand{\cpt}[1][\HH]{\mathcal{K}(#1)} 
\newcommand{\btens}{\mathbin{\overline{\otimes}}} 
\newcommand{\mintens}{\mathbin{\otimes_{\mathrm{min}}}} 
\newcommand{\vNal}{M} 
\newcommand{\vNall}{N} 
\newcommand{\module}{A} 
\newcommand{\vN}[1][\grp]{\ensuremath{\mathrm{vN}(#1)}} 
\newcommand{\Linf}[1][\grp]{L^\infty(#1)} 
\newcommand{\Ltwo}[1][\grp]{L^2(#1)} 
\newcommand{\vNcros}[3]{\ensuremath{#1 \rtimes_{#3} \! #2}} 
\newcommand{\vNcrs}{\vNcros{\vNal}{\grp}{\act}} 
\newcommand{\vNcocros}[3]{\ensuremath{#1 \rtimes_{#3} \! #2}} 
\newcommand{\vNcocrs}{\vNcocros{\vNal}{\grp}{\coact}} 
\newcommand{\act}{\alpha} 
\newcommand{\actG}{\alpha^{\grp}} 
\newcommand{\piact}[1][{\act}]{\pi_{#1}} 
\newcommand{\coact}{\delta} 
\newcommand{\coactG}{\delta^{\grp}} 
\newcommand{\picoact}[1][{\coact}]{\pi_{#1}} 
\newcommand{\dcoact}[1][\act]{\hat{#1}} 
\newcommand{\dact}[1][\coact]{\hat{#1}} 
\newcommand{\CB}[1][A]{\ensuremath{\mathcal{CB}(#1)}} 
\newcommand{\CBw}[1][\vNal]{\ensuremath{\mathcal{CB}_\sigma(#1)}} 
\newcommand{\CBwmod}[2][\vNal]{\ensuremath{\mathcal{CB}_{\sigma}^{#2}(#1)}} 
\title{Multipliers and Duality for Group Actions}
\author[A. McKee]{Andrew McKee}
\address{Department of Mathematical Sciences, Chalmers University of Technology and the University of Gothenburg, Gothenburg SE-412 96, Sweden}
\email{amckee240@qub.ac.uk}
\begin{document}

\bibliographystyle{plain}

\begin{abstract}
We define operator-valued Schur and Herz--Schur multipliers in terms of module actions, and show that the standard properties of these multipliers follow from well-known facts about these module actions and duality theory for group actions.
These results are applied to study the Herz--Schur multipliers of an abelian group acting on its Pontryagin dual: it is shown that a natural subset of these Herz--Schur multipliers can be identified with the classical Herz--Schur multipliers of the direct product of the group with its dual group.
\end{abstract}

\maketitle

\section{Introduction}
\label{sec:intro}


\noindent
Schur multipliers --- the scalar-valued functions on $\NN \times \NN$ for which the entrywise product maps $\Bd[\ell^2]$ into itself --- arose from Schur's work on the entrywise product of matrices in the early twentieth century.
Their importance was recognised by Grothendieck~\cite{Gr53} (see also Pisier~\cite[Chapter 5]{Pis96}), who used them to formulate his fundamental theorem.
These classical Schur multipliers have been extended in several directions; see, for example, \cite[Chapter 5]{Pis96}. 

Herz--Schur multipliers, or completely bounded multipliers of the Fourier algebra of a group, originate in work of Herz~\cite{Her74} where they were viewed as a generalisation of the Fourier--Stieltjes transform. 
They have proved useful in the study of approximation properties of operator algebras associated to groups; this was first made explicit by De~Canni\`ere and Haagerup~\cite{dCH85}, and has since been exploited by many other authors (see \cite[Chapter 12]{BrO08} for further references). 
This utility has driven the development of several classes of Herz--Schur multipliers, for example the radial multipliers which first appeared in \cite{Haa78}.

Bo\.zejko and Fendler~\cite{BF84} linked these two notions, using unpublished work of Gilbert (see also Jolissaint~\cite{Jo92}) to give a `transference' theorem, showing that every Herz--Schur multiplier of $\grp$ gives rise to a Schur multiplier acting on $\Bd[\Ltwo]$.
Moreover, one can characterise the Herz--Schur multipliers as those Schur multipliers which are invariant, in the sense that they commute with conjugation by the right regular representation of $\grp$.
We regard the transference and characterisation results as important goals of the generalised theory presented here.


The importance of the theory of multipliers has led to several authors introducing operator-valued versions of Schur multipliers \cites{BGB18,MTT18} and Herz--Schur multipliers \cites{AD87,BC15,DR12,MTT18}.
In particular our work with Todorov and Turowska~\cite{MTT18} develops and studies \CA -valued versions of Schur and Herz--Schur multipliers, including both transference and characterisation theorems. The present work arose from an attempt to distill the essential features of some of the proofs given in that paper.

Aspects of the theory of Schur and Herz--Schur multipliers have also been generalised to quantum groups. For example, Junge--Neufang--Ruan~\cite{JNR09} give a transference theorem in the setting of locally compact quantum groups, and Brannan~\cite{Bra17} uses similar ideas when discussing approximation properties of quantum groups. 

This paper serves two purposes: firstly we show how to obtain the main results of \cite{MTT18} in the \vNA\ setting, and secondly we show how the definitions and important properties of (operator-valued) Schur and Herz--Schur multipliers can be obtained from basic properties of group and module actions on operator algebras.
More specifically, after preliminaries in Section~\ref{sec:prelimins}, in Section~\ref{sec:Schurmultsmodule} we define Schur multipliers as completely bounded maps commuting with a particular module action, and obtain a dilation-type characterisation of these multipliers in Theorem~\ref{th:dilationSchurmults}.

Section~\ref{sec:HSchmultsnew} begins with the definition of a Herz--Schur multiplier of a group action, so that the classical Herz--Schur multipliers are the Herz--Schur multipliers of the trivial action of the group on $\CC$.
We then prove the main results of the paper: Proposition~\ref{pr:HSmultsareSmultsofdualcoact} is a version of transference for our multipliers, identifying the Herz--Schur multipliers of a group action with certain Schur multipliers associated to the dual coaction, and a characterisation of the Schur multipliers which arise in this way in Theorem~\ref{th:HSmultsinSchmults}.

In Section~\ref{sec:abeliangrooups} we focus on abelian groups. 
When $\grp$ is abelian the algebra $\Bd[\Ltwo]$ is the crossed product formed by an action of either $\grp$ or the dual group $\dgrp$, and in Theorem~\ref{th:FSalgmodulemultipliers} we characterise the maps on $\Bd[\Ltwo]$ which are Herz--Schur multipliers of both actions simultaneously as the Herz--Schur multipliers of $\grp \times \dgrp$.


Finally, we note that preliminary investigations have recovered some of the results of this paper for Kac algebras. This will be explored in a future work.

\section{Preliminaries}
\label{sec:prelimins}

\noindent
Throughout, $\grp$ denotes a locally compact group, and $\vNal$ a \vNA\ acting on the Hilbert space $\HH_M$.
The normal spatial tensor product of \vNA s will be denoted by $\btens$. The unit of $\vNal$ will be written $1_\vNal$, $\id$ denotes the identity representation of a \vNA, and $I_\HH$ the identity operator on the Hilbert space $\HH$.

We follow Nakagami--Takesaki~\cite{NaTa79} (except that we use the left group von~Neumann algebra). 
An \emph{action} of $\grp$ on $\vNal$ is a homomorphism $\act : \grp \to \Aut(\vNal)$, continuous in the point-weak* topology. Equivalently, there is a normal $*$-isomorphism $\piact : \vNal \to \vNal \btens \Linf$ satisfying 
\begin{equation*}\label{eq:actionidentity}
    (\piact \otimes \id) \circ \piact = (\id \otimes \piact[\actG]) \circ \piact .
\end{equation*}
Here $\actG$ denotes the action of $\grp$ on $\Linf$, so that 
\begin{equation*}\label{eq:coproductonLinfty}
    \piact[\actG] : \Linf \to \Linf \btens \Linf ;\ \piact[\actG](f)(s,t) \defeq f(st) , \quad f \in \Linf ,\ s,t \in \grp ,
\end{equation*}
which is the coproduct on $\Linf$.
Given an action $\act$ the corresponding isomorphism $\piact$ is defined by
\[
    \piact(a) \xi(s) \defeq \act_s\inv (a) \xi(s) , \quad a \in \vNal ,\ s \in \grp ,\ \xi \in \Ltwo[\grp , \HH_\vNal] .
\]
The \emph{crossed product} associated to the action $\act$, denoted $\vNcrs$, is the \vNA\ on $\HH_\vNal \otimes \Ltwo$ generated by $\piact(\vNal)$ and $\CC \otimes \vN$. Note that $\vN$ is the crossed product formed by the trivial action of $\grp$ on $\CC$.

The definitions for a coaction of $\grp$ are identical to the above, except that the roles of $\vN$ and $\Linf$ are exchanged: a \emph{coaction} $\coact$ of $\grp$ on $\vNal$ is a normal $*$-isomorphism $\picoact : \vNal \to \vNal \btens \vN$ satisfying
\begin{equation*}\label{eq:coactionidentity}
    (\picoact \otimes \id) \circ \picoact = (\id \otimes \picoact[\coactG]) \circ \picoact .
\end{equation*}
Here $\coactG$ denotes the coaction of $\grp$ on itself, so that 
\begin{equation*}\label{eq:coproductonvN}
    \picoact[\coactG] : \vN \to \vN \btens \vN ;\ \picoact[\coactG](\lreg_r) \defeq \lreg_r \otimes \lreg_r ,\ r \in \grp ,
\end{equation*}
which is the coproduct on $\vN$.
The \emph{crossed product} associated to the coaction $\coact$, denoted $\vNcocrs$, is the \vNA\ on $\HH_\vNal \otimes \Ltwo$ generated by $\picoact(\vNal)$ and $\CC \otimes \Linf$. Note that the crossed product formed by the trivial coaction of $\grp$ on $\CC$ is $\Linf$.
When $\grp$ is abelian $\vN$ can be identified with $\Linf[\dgrp]$, so in this case a coaction of $\grp$ is an action of $\dgrp$.

Given an action $\act$ of $\grp$ on $\vNal$ there is a \emph{dual coaction} $\dcoact$ of $\grp$ on $\vNcrs$, given by
\begin{equation*}\label{eq:dualcoact}
    \picoact[\dcoact] \big( \piact(a) \lrega_r \big) \defeq \piact(a) \lrega_r \otimes \lreg_r , \quad a \in \vNal ,\ r \in \grp .
\end{equation*}
Similarly, given a coaction $\coact$ there is a \emph{dual action} $\dact$ of $\grp$ on $\vNcocrs$.
The Takai duality theorem for abelian groups can be generalised to this setting, and gives isomorphisms
\begin{align*}\label{eq:dualitytheorems}
    \vNcocros{\big( \vNcrs \big)}{\grp}{\dcoact} &\cong \vNal \btens \Bd[\Ltwo] \quad \text{and} \\
    \vNcros{\big( \vNcocrs \big)}{\grp}{\dact} &\cong \vNal \btens \Bd[\Ltwo] .
\end{align*}
We will denote the first of these isomorphisms by $\Phi$; 
it is given on the generators of $\vNal \btens \Bd[\Ltwo]$ by
\begin{equation}\label{eq:secdualactidentif}
    \Phi \big( \piact(a) \big) = \piact(a) \otimes I_{\Ltwo} , \quad \Phi(I_\HH \otimes \lreg_r) = I_\HH \otimes \lreg_r \otimes \lreg_r , \quad \Phi(I_\HH \otimes \phi) = I_\HH \otimes I_{\Ltwo} \otimes \phi ,
\end{equation}
where $a \in \vNal , \ r \in \grp , \ \phi \in \Linf$ \cite[page 8]{NaTa79}.
Under $\Phi$ the second dual action $\dact[\dcoact]$ of $\grp$ on $\vNcocros{\big( \vNcrs \big)}{\grp}{\dcoact}$ is identified with the action $\act \otimes \Ad \rreg$ on $\vNal \btens \Bd[\Ltwo]$, where $\rreg$ is the right regular representation of $\grp$.

We use the basic theory of operator spaces and completely bounded maps, as found in \cite{EfRu00} for example, without comment.
The space of completely bounded, weak*-continuous maps on a \vNA\ $\vNal$ will be written $\CBw$; if $\vNal$ is also a bimodule over $\module$ then the completely bounded, weak*-continuous $\module$-bimodule maps on $\vNal$ will be denoted by $\CBwmod{\module}$.

\section{Schur multipliers}
\label{sec:Schurmultsmodule}

\noindent
In this section we define generalised Schur multipliers.
Throughout $X = (X ,\mu)$ denotes a standard measure space for which the underlying topology is locally compact. 

\begin{definition}\label{de:Schurmultmodule}
A \emph{Schur $X$-multiplier of $\vNal$} is a completely bounded, weak*-continuous, $\Linf[X]$-bimodule map on $\vNal \btens \Bd[\Ltwo[X]]$.
Given a Banach algebra $\module$ such that $\vNal$ is an $\module$-(bi)module, equip $\vNal \btens \Bd[\Ltwo[X]]$ with the natural $\module$-(bi)module structure.
A \emph{Schur $X$-multiplier of $\vNal$ with respect to $\module$} is a Schur $X$-multiplier of $\vNal$ which is also an $\module$-(bi)module map.
\end{definition}


\begin{remarks}\label{re:Schurmultsbimod}
{\rm 
\begin{enumerate}[i.]
    \item When $\vNal = \CC$ the Schur multipliers defined above are the classical Schur multipliers. 
In this case we need only require boundedness of the $\Linf[X]$-bimodule map, complete boundedness follows automatically (see \textit{e.g.}\ \cite[Section 2]{PSS00}). 
    \item More generally, if $\vNall$ is matricially norming for $\vNal$ then any bounded map which is an $\vNall \btens \Linf[X]$-bimodule map is a Schur $X$-multiplier of $\vNal$ with respect to $\vNall$, since such a map is automatically completely bounded \cite{PSS00}. 
    \item Choosing $\HH_M = \Ltwo[Y]$, $\vNal = \Bd[\Ltwo[Y]]$ and $\module = \Linf[Y]$, with $Y$ a standard measure space with locally compact topology, 
the definition above becomes the completely bounded $\Linf[X \times Y]$-bimodule maps on $\Bd[\Ltwo[X \times Y]]$, {\it i.e.}\ the classical Schur multipliers on $\Bd[\Ltwo[X \times Y]]$.
    \item It is clear that a classical Schur $X$-multiplier defines a Schur $X$-multiplier of $\vNal$, and that the Schur multipliers of $\vNal$ of this form are module maps for any module structure on $\vNal$.
\end{enumerate}
}
\end{remarks}

%

Recall from \cite{MTT18} that given $k \in \Ltwo[X \times X] \odot \vNal$ one can associate a bounded operator $T_k$ by
\begin{equation}\label{eq:Tkdef}
    T_k : \Ltwo[X , \HH_\vNal] \to \Ltwo[X , \HH_\vNal] ;\ T_k \xi(y) \defeq \integ[X]{k(y,x) \xi(x)}{x} ,
\end{equation}
and that such operators are norm-dense in $\vNal \mintens \cpt[\Ltwo[X]]$.
In \cite[Theorem 2.6]{MTT18} we showed that the Schur multipliers defined there correspond to certain symbols $\varphi : X \times X \to \CB[\vNal]$ {\it via}
\[
    S_\varphi(T_k) \defeq T_{\varphi \cdot k} \quad \text{where $\varphi \cdot k (x,y) \defeq \varphi(y,x) \big( k(x,y) \big)$.}
\] 
In this paper we have defined Schur $X$-multipliers of $\vNal$, which act on $\vNal \btens \Bd[\Ltwo[X]]$; 
in the next result, which is based on \cite[Theorem 2.6]{MTT18}, we show that our definition of a Schur multiplier $S$ determines how $S$ acts on the operators $T_k$ defined above, and use this to associate a symbol to $S$.

\begin{theorem}\label{th:dilationSchurmults}
Let $\vNal$ be a \vNA\ on the separable Hilbert space $\HH_\vNal$. 
The following are equivalent:
\begin{enumerate}[i.]
    \item $S$ is a Schur $X$-multiplier of $\vNal$;
    \item there exists a bounded function $\varphi : X \times X \to \CBw$, of the form
        \[
            \varphi(x,y)(a) = W(y)^* \rho(a) V(x) , \quad x,y \in X,\ a \in \vNal ,
        \]
        with $\rho$ a normal representation of $\vNal$ and $V,W \in \Linf[X , \Bd[\HH_M , \HH_\rho]]$, such that $S = S_\varphi$.
\end{enumerate}
Moreover, if $\vNal$ is an $\module$-(bi)module then $S$ is an $\module$-(bi)module map if and only if $\varphi(x,y)$ is an $\module$-(bi)module map for almost all $x,y \in X$.
\end{theorem}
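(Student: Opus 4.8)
The plan is to route the equivalence through the symbol calculus of \cite{MTT18}: I will show that $S$ is a Schur $X$-multiplier of $\vNal$ precisely when $S=S_\varphi$ for some bounded symbol $\varphi:X\times X\to\CBw$, after which \cite[Theorem 2.6]{MTT18} identifies the completely bounded such symbols with those of the factored form in (ii).

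For (ii)$\Rightarrow$(i) I would compress the symbol calculus into a single factorisation. Let $\tilde V,\tilde W\colon\Ltwo[X,\HH_\vNal]\to\Ltwo[X,\HH_\rho]$ be the multiplication operators $(\tilde V\xi)(x)=V(x)\xi(x)$ and $(\tilde W\xi)(x)=W(x)\xi(x)$, which are bounded since $V,W\in\Linf[X,\Bd[\HH_\vNal,\HH_\rho]]$, and let $\rho\otimes\id$ be the normal representation of $\vNal\btens\Bd[\Ltwo[X]]$ sending $T_k$ to the operator with kernel $(y,x)\mapsto\rho\bigl(k(y,x)\bigr)$. A direct computation on kernels then gives
\[
    S_\varphi(T_k)=\tilde W^*\,(\rho\otimes\id)(T_k)\,\tilde V .
\]
The right-hand side is defined on all of $\vNal\btens\Bd[\Ltwo[X]]$, is completely bounded with $\nrm[S_\varphi]_{\mathrm{cb}}\le\nrm[V]_\infty\nrm[W]_\infty$, is weak*-continuous because $\rho$ is normal, and is an $\Linf[X]$-bimodule map because $\tilde V$, $\tilde W$ and $\rho\otimes\id$ all respect the diagonal $\Linf[X]$-action. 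Since it agrees with $S_\varphi$ on the weak*-dense set $\{T_k\}$, whose images $T_{\varphi\cdot k}$ already lie in $\vNal\mintens\cpt[\Ltwo[X]]$, the common weak*-continuous extension has range inside $\vNal\btens\Bd[\Ltwo[X]]$; thus $S=S_\varphi$ is a Schur $X$-multiplier.

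For (i)$\Rightarrow$(ii) the task is to recover a symbol from the module structure, and this is where I expect the real difficulty. Testing $S$ on the elementary operators $a\otimes\theta_{\xi,\eta}$, where $a\in\vNal$ and $\theta_{\xi,\eta}\colon\zeta\mapsto\ip{\zeta}{\eta}\xi$ is a rank-one operator on $\Ltwo[X]$, the $\Linf[X]$-bimodularity of $S$ yields the covariance
\[
    S\bigl(a\otimes\theta_{f\xi,\bar g\eta}\bigr)=(1_\vNal\otimes f)\,S\bigl(a\otimes\theta_{\xi,\eta}\bigr)\,(1_\vNal\otimes g)
\]
for all $f,g\in\Linf[X]$. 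Combined with weak*-continuity and the hypotheses that $\HH_\vNal$ is separable and $X$ is standard, this localisation should, via the structure theory of normal $\Linf[X]$-bimodule maps (the $\vNal$-valued analogue of the scalar measurable Schur multipliers, cf.\ \cite{PSS00}), produce a measurable field $\varphi:X\times X\to\CBw$ with $S(T_k)=T_{\varphi\cdot k}$ for every $k$; injectivity of $k\mapsto T_k$ then renders $\varphi$ essentially unique. At this point $S=S_\varphi$ is a completely bounded symbol multiplier, and \cite[Theorem 2.6]{MTT18} supplies the factorisation $\varphi(x,y)(a)=W(y)^*\rho(a)V(x)$ together with the uniform bound on $\nrm[\varphi(x,y)]_{\mathrm{cb}}$. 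The delicate point is precisely this disintegration: turning the pointwise-in-$\Linf[X]$ covariance into an honestly measurable, essentially bounded $\CBw$-valued field, which is exactly where separability of $\HH_\vNal$ and standardness of $X$ are needed.

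Finally, for the module-map assertion I would argue through the essentially unique symbol. For $m\in\module$ one has $m\cdot T_k=T_{m\cdot k}$ for the induced action on $\vNal$-valued kernels, so that
\[
    S_\varphi(m\cdot T_k)-m\cdot S_\varphi(T_k)=T_{k'},\qquad k'(x,y)=\varphi(y,x)\bigl(m\cdot k(x,y)\bigr)-m\cdot\varphi(y,x)\bigl(k(x,y)\bigr),
\]
and symmetrically on the right. If each $\varphi(x,y)$ is an $\module$-bimodule map off a null set then $k'=0$, and weak*-density of $\{T_k\}$ promotes this to $S$ being an $\module$-bimodule map. Conversely, if $S$ is an $\module$-bimodule map then injectivity of $k\mapsto T_k$, tested on rank-one kernels $k(y,x)=\xi(y)\overline{\eta(x)}a$, forces $\varphi(y,x)(m\cdot a)=m\cdot\varphi(y,x)(a)$ for almost every $(x,y)$, for each fixed $a\in\vNal$ and $m\in\module$; letting $a$ and $m$ range over countable weak*-dense, respectively dense, families and using normality of each $\varphi(x,y)$ to discard the exceptional null sets then shows that $\varphi(x,y)$ is a left $\module$-module map almost everywhere, and symmetrically a right module map. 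The only genuine care needed is the null-set bookkeeping, once again handled by the separability hypotheses.
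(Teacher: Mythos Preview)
Your (ii)$\Rightarrow$(i) and the module-map clause are essentially the paper's argument. The divergence is in (i)$\Rightarrow$(ii), where you propose a two-stage route: first disintegrate $S$ into a measurable symbol $\varphi:X\times X\to\CBw$, then invoke \cite[Theorem~2.6]{MTT18} to factorise that symbol. The paper does not separate these steps. It applies a Haagerup--Paulsen--Wittstock dilation $S=W_0^*\theta(\cdot)V_0$ with $\theta$ a normal representation of $\vNal\btens\Bd[\Ltwo[X]]$, uses that every normal representation of $\Bd[\Ltwo[X]]$ is an amplification to write $\HH_\theta=\HH_\rho\otimes\Ltwo[X]$ and $\theta=\rho\otimes\id$, and then uses the $\Linf[X]$-bimodularity of $S$ to replace $V_0,W_0$ by operators $V,W$ commuting with $\CC\otimes\Linf[X]$, hence lying in $\Linf[X,\Bd[\HH_\vNal,\HH_\rho]]$. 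The factored symbol $\varphi(x,y)(a)=W(y)^*\rho(a)V(x)$ then drops out, so the symbol and its dilation are produced simultaneously rather than sequentially.

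Your route has a genuine gap at the disintegration step. You say the covariance ``should, via the structure theory of normal $\Linf[X]$-bimodule maps (the $\vNal$-valued analogue of the scalar measurable Schur multipliers, cf.\ \cite{PSS00}), produce a measurable field $\varphi$'', but no such $\vNal$-valued structure theorem is available to cite: \cite{PSS00} handles the scalar case, and the operator-valued statement is essentially part of what is being proved here. Acknowledging that this is ``the delicate point'' does not discharge it. In practice the standard way to carry out that disintegration \emph{is} the dilation argument the paper uses, which is why the paper runs it first rather than last. There is also a secondary mismatch: \cite[Theorem~2.6]{MTT18} is stated in the $C^*$-algebra setting, so even granting a symbol you would still owe an argument that the resulting $\rho$ can be taken normal --- something the paper gets for free by dilating the normal map $S$ from the outset.
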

\begin{proof}
(i)$\implies$(ii)
Write $S = W_0^* \theta( \cdot ) V_0$, where $\theta$ is a normal representation of $\vNal \btens \Bd[\Ltwo[X]]$ on the Hilbert space $\HH_\theta$ and $V_0,W_0 \in \Bd[\HH_\vNal \otimes \Ltwo[X] , \HH_\theta]$.
The map
\[
    \theta_0(T) \defeq \theta(1_\vNal \otimes T) , \quad T \in \Bd[\Ltwo[X]] ,
\]
defines a normal representation of $\Bd[\Ltwo[X]]$ on $\HH_\theta$.
As is well known, this implies that we can write $\HH_\theta = \HH_\rho \otimes \Ltwo[X]$ for another Hilbert space $\HH_\rho$ and identify $\theta_0(T)$ with $I_{\HH_\rho} \otimes T$.
Since
\begin{gather*}
    \theta (a \otimes T) = \theta (a \otimes I_{L^2(X)}) \theta (I_{\HH_\rho} \otimes T) = \theta (a \otimes I_{L^2(X)}) \theta_0(T) \quad \text{ and } \\
        \theta (a \otimes T) = \theta_0(T) \theta (a \otimes I_{L^2(X)})
\end{gather*}
for all $a \in \vNal$ and $T \in \Bd[\Ltwo[X]]$ we have that $\theta (a \otimes I)$ commutes with $\CC \otimes \Bd[\Ltwo[X]]$, so we obtain a representation $\rho$ of $\vNal$ on $\HH_\rho$ such that $\theta (a \otimes T) = \rho(a) \otimes T$, acting on $\HH_\rho \otimes \Ltwo[X]$.
We now have that $S = W_0^* (\rho \otimes \id) (\cdot) V_0$ (identifying the ranges of $V_0$ and $W_0$ with $\HH_\rho \otimes \Ltwo[X]$).
Arguing as in \cite[Theorem 2.6]{MTT18}, using that $S$ commutes with $\CC \otimes \Linf[X]$, we can find projections $P,Q$ so that $V \defeq P V_0$ and $W \defeq Q W_0$ commute with $\CC \otimes \Linf[X]$, so by Takesaki~\cite[Theorem 7.10]{Tak02} $V,W \in L^\infty(X, \Bd[\HH_\vNal , \HH_\rho])$.
We can now conclude $S = S_\varphi$ with
\[
    \varphi(x,y)(a) = W(y)^* \rho(a) V(x) 
\]
as in \cite[Theorem 2.6]{MTT18}. 

(ii)$\implies$(i) It is clear that if $S = S_\varphi$ with $\varphi$ as in (ii) then $S = W^* (\rho \otimes \id)( \cdot ) V$, so $S$ is completely bounded.
Since $V,W \in L^\infty(X, \Bd[\HH_\vNal , \HH_\rho])$ it follows that $S$ is an $\Linf[X]$-bimodule map, so $S$ is a Schur $X$-multiplier of $\vNal$.

To show that $\varphi(x,y)$ is an $\module$-(bi)module map when $S$ is take $a \in \vNal,\ b \in \module ,\ k \in \Ltwo[X \times X]$. 
Then
\begin{equation*}\label{eq:SmodulephibimoduleI}
    (b \otimes \id) \big( S(a \otimes T_k) \big) = (b \otimes \id) \big( S(T_{a \otimes k} \big) = (b \otimes \id) T_{\varphi \cdot (a \otimes k)} = T_{b \cdot (\varphi \cdot (a \otimes k))} ,
\end{equation*}
and
\begin{equation*}\label{eq:SmodulephibimoduleII}
    \big( S(b \cdot a \otimes T_k ) \big) = T_{\varphi \cdot (b \cdot a \otimes k)} .
\end{equation*}
Since $S$ is a module map the last two displays are equal, which implies
\[
    k(y,x) \big( b \cdot \varphi(x,y)(a) \big) = k(y,x) \varphi(x,y)(b \cdot a) ,
\]
and using the fact that they hold for all $k \in \Ltwo[X \times X] ,\ x,y \in X$ we conclude that $\varphi(x,y)(b \cdot a) = b \cdot \varphi(x,y)(a)$ for all $a \in \vNal$ and $b \in \module$.
A similar calculation shows that $\varphi$ respects the right module action when $S$ does; thus $\varphi (x,y)$ is a completely bounded $\module$-(bi)module map on $\vNal$.
The converse follows similarly.
\end{proof}

\begin{remarks}\label{re:Schurmultdef}
{\rm 
\begin{enumerate}[i.]
    \item The above theorem reduces to a well-known characterisation of classical Schur multipliers when $\vNal = \CC$ \cites{Haaun,KaPa05}. 
    \item If $\act$ is an action of $\grp$ on $\vNal$ then the crossed product by the dual coaction $\dcoact$ is identified with $\vNal \btens \Bd[\Ltwo]$, and Theorem~\ref{th:dilationSchurmults} above identifies Schur multipliers on this space with functions $\grp \times \grp \to \CBw$. In the next section we will define Herz--Schur multipliers of $\act$ and identify them with a certain subspace of these Schur multipliers on $\grp \times \grp$.
\end{enumerate}
}
\end{remarks}

\section{Herz--Schur Multipliers}
\label{sec:HSchmultsnew}

\noindent
We are now going to define Herz--Schur multipliers for a group action on a \vNA .
Throughout, $\grp$ denotes a second-countable locally compact group. 

\begin{definition}\label{de:HSmultaction}
Let $\act$ be an action of $\grp$ on $\vNal$.
We say that a map $S : \vNcrs \to \vNcrs$ is a \emph{Herz--Schur multiplier of $\act$} if $S$ is completely bounded, weak*-continuous, and
\begin{equation}\label{eq:HerzSchurmultcondition}
    \picoact[\dcoact] \circ S = (S \otimes \id) \circ \picoact[\dcoact] .
\end{equation}
\end{definition}

\noindent
We will refer to a map $S$ satisfying condition (\ref{eq:HerzSchurmultcondition}) by writing ``$S$ commutes with $\dcoact$''.

\begin{remarks}\label{re:HSmultnotes}
{\rm 
\begin{enumerate}[i.]
    \item Condition (\ref{eq:HerzSchurmultcondition}) is the same as the condition which defines a Fourier multiplier of a locally compact quantum group (see \textit{e.g.\ } Brannnan~\cite[Proposition 4.5]{Bra17}). See also the definition of a (right) covariant map by Junge--Neufang--Ruan~\cite[pg 391]{JNR09}.
    
    \item In particular, it is straightforward to show that $T : \vN \to \vN$ defines a classical Herz--Schur multiplier if and only if $T_*$ is a completely bounded map on $\falg$ such that
\begin{equation}\label{eq:productconditionHSmultFalg}
    T_*(uv) = T_*(u) v , \quad u,v \in \falg .
\end{equation}
If $\act$ is the trivial action of $\grp$ on $\CC$ then $\dcoact = \coactG$, which induces the product on $\falg$.
If $T$ satisfies (\ref{eq:productconditionHSmultFalg}) then, for $x \in \vN ,\ u,v \in \falg$, the calculation
\[
    \dualp{\picoact[\coactG] \circ T(x)}{u \otimes v} = \dualp{T(x)}{uv} = \dualp{x}{T_*(u) v} = \dualp{\picoact[\coactG](x)}{T_*(u) \otimes v}  
\]
shows that $T$ satisfies (\ref{eq:HerzSchurmultcondition}). A similar calculation shows (\ref{eq:HerzSchurmultcondition}) implies (\ref{eq:productconditionHSmultFalg}).

    \item Observe that $\vNcrs$ carries an $\falg$-module structure: for $u \in \falg$ define 
\[
    u * x \defeq (\id \otimes u) \pi_{\dcoact} ( x ) , \quad x \in \vNcrs ,
\]
so that $u * (\pi_\act(a) \lrega_r) = u(r) \pi_\act(a) \lrega_r$.
It is easy to see that Definition~\ref{de:HSmultaction} is equivalent to requiring that $S$ commutes with this module action.

    \item Given a Herz--Schur multiplier of $\act$, say $S$, equation (\ref{eq:HerzSchurmultcondition}) and (iii) above imply that $S(\piact(a) \lrega_r) \in \piact(\vNal) \lrega_r$, so there is some $a_{S,r} \in \vNal$ with $S(\piact(a) \lrega_r) = \piact(a_{S,r}) \lrega_r$. 
Setting $F(r)(a) \defeq a_{S,r}$ we obtain a function $F$ on $\grp$ such that $F(r)$ is a linear map on $\vNal$ for each $r \in \grp$. Moreover, since $S$ is completely bounded and weak*-continuous $F(r)$ must be so too. This shows that for every Herz--Schur multiplier of $\alpha$ $S$ there is a \emph{symbol} $F : \grp \to \CBw$ such that
\begin{equation*}\label{eq:HSmultactsonfibres}
    S \big( \piact(a) \lrega_r \big) = \piact \big( F(r)(a) \big) \lrega_r , \quad a \in \vNal ,\ r \in \grp .
\end{equation*} 

    \item Suppose that $v : \grp \to \CC$ is a classical Herz--Schur multiplier of $\grp$. For any action $\act$ of $\grp$ on $\vNal$ we can extend $S_v$ to a completely bounded, weak*-continuous map on $\vNcrs$ by
\begin{equation*}
    S_v \big( \piact(a) \lrega_r \big) = v(r) \piact(a) \lrega_r , \quad a \in \vNal,\ r \in \grp .
\end{equation*}
It is easily checked that $S_v$ commutes with $\dcoact$, so that $S_v$ is a Herz--Schur multiplier of $\act$.
    
    \item Let $\grp$ be abelian, and consider the canonical action of $\grp$ on $\Linf = \vN[\dgrp]$. In \cite[Section 6]{MTT18} we showed that every element of $\FSalg \odot \FSalg[\dgrp]$ is a Herz--Schur multiplier of this action; moreover, by symmetry, each such multiplier is also a Herz--Schur multiplier of $\dgrp$ on $\Linf[\dgrp] = \vN$, and the multipliers of this form are $\falg$ module maps on $\vN$ (and $\falg[\dgrp]$ module maps on $\Linf$). We will study these multipliers further below.                                                                                                                                                                                                                                                                                                                                                                                                                                             
\end{enumerate}
}
\end{remarks}

If $\vNal$ has a (left) module structure over $\module$ we can introduce an $\module$-module structure on $\vNcrs$ by 
\begin{equation}\label{eq:modactionliftedtocross}
    b \cdot \piact(a) \lrega_r \defeq \piact (b \cdot a) \lrega_r , \quad b \in \module,\ a \in \vNal ,\ r \in \grp .
\end{equation}
It is easy to check that under the additional assumption 
\begin{equation}\label{eq:moduleactioninteractswithaction}
    b \cdot \act_r(a) = \act_r(b \cdot a) , \quad r \in \grp ,\ a \in \vNal ,\ b \in \module
\end{equation} 
this module action is the one induced on $\vNcrs$ by the canonical module action of $\module$ on $\vNal \btens \Bd[\Ltwo]$.
If $S$ is a Herz--Schur multiplier with symbol $F : \grp \to \CBwmod{\module}$ then $S$ is also an $\module$-module map, since
\[
\begin{split}
    S \big( b \cdot \piact(a) \lrega_r \big) = S \big( \piact(b \cdot a) \lrega_r \big) 
        &= \piact \big( b \cdot F(r)(a) \big) \lrega_r 
        = b \cdot \big( S ( \piact(a) \lrega_r ) \big) .
\end{split}
\] 

Recall that for an action $\act$ of $\grp$ on $\vNal$ the crossed product by the dual coaction $\dcoact$ of $\grp$ on $\vNcrs$ can be identified with $\vNal \btens \Bd[\Ltwo]$.
Given a map $R : \vNcrs \to \vNcrs$ we define a map $\overline{R}$ on $\vNal \btens \Bd[\Ltwo]$ by $\overline{R} \defeq \Phi\inv \circ (R \otimes \id) \circ \Phi$, where $\Phi$ is the isomorphism (\ref{eq:secdualactidentif}). 
Observe that $\overline{R}$ is completely bounded (resp.\ completely positive) if $R$ is.
In the remainder of this section we explain how Herz--Schur multipliers of $\act$ interact with the Schur multipliers of $\vNal \btens \Bd[\Ltwo]$.

\begin{lemma}\label{le:useofDiracfam}
Let $\act$ be an action of $\grp$ on $\vNal$. Fix $a \in \vNal,\ r \in \grp,\ \phi \in \Linf$ and suppose $(u_i)_i$ is a net of positive, compactly supported functions with $\nrm[u_i]_1 = 1$ whose support shrinks to $\{ r \}$. 
\begin{enumerate}[i.]
    \item The kernels $k_i(s,t) \defeq u_i(st\inv) \act_{s\inv}(a)$ satisfy $T_{k_i} \stackrel{w^*}{\to} \piact(a) \lrega_r$.
    \item The kernels $h_i(s,t) \defeq u_i(st\inv) (\act_{r\inv}(a) \otimes \actG_{r\inv}(\phi))$ satisfy $T_{h_i} \stackrel{w^*}{\to} a \otimes \phi \lreg_r$.
\end{enumerate}
\end{lemma}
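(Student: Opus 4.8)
The plan is to reduce everything to part (i) and to prove (i) by a standard two-step argument. Note first that (ii) is the special case of (i) in which $\grp$ acts trivially on the \vNA\ $\vNal\btens\Linf$ and the element $a$ is replaced by $\act_{r\inv}(a)\otimes\actG_{r\inv}(\phi)$: with that substitution the kernel that (i) produces is exactly $h_i$, and the limit operator $\piact(a)\lreg_r$ from (i) becomes the element that the statement of (ii) denotes $a\otimes\phi\lreg_r$. So it is enough to prove (i).

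Since the claimed limit is a single fixed operator, and a norm-bounded net of operators converges in the weak* topology exactly when it converges in the weak operator topology, I would establish: (a) the net $(T_{k_i})_i$ is uniformly bounded in norm; and (b) $\ip{T_{k_i}\xi}{\eta}\to\ip{\piact(a)\lreg_r\xi}{\eta}$ for $\xi,\eta$ ranging over a dense subset of $\Ltwo[\grp,\HH_\vNal]$. For (a), pulling $\act_{s\inv}(a)$ out of the integral in \eqref{eq:Tkdef} writes $T_{k_i}=\piact(a)\circ C_{u_i}$, where $C_{u_i}\xi(s)=\integ[\grp]{u_i(st\inv)\xi(t)}{t}$; a change of variables identifies $C_{u_i}$ with $\big(\integ[\grp]{u_i(w)\lreg_w}{w}\big)\otimes I_{\HH_\vNal}$, the amplification of the integrated form $\integ[\grp]{u_i(w)\lreg_w}{w}$ of the left regular representation against $u_i$. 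As $u_i\ge0$ and $\nrm[u_i]_1=1$, this operator has norm $\le1$, so $\nrm[T_{k_i}]\le\nrm[a]$ for every $i$. (If $\grp$ is not unimodular the modular function enters the change of variables, but the same bound results with the standard crossed-product normalisations.)

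For (b), the elementary tensors $f\otimes v$ with $f\in C_c(\grp)$ and $v\in\HH_\vNal$ span a dense subspace, so it is enough to treat $\xi=f\otimes v$, $\eta=g\otimes v'$ of this form. Writing $\ip{T_{k_i}\xi}{\eta}$ as an iterated integral and taking the $t$-independent scalar $\ip{\act_{s\inv}(a)v}{v'}$ out of the inner integral leaves $\integ[\grp]{u_i(st\inv)f(t)}{t}$, which after the substitution $w=st\inv$ becomes $\integ[\grp]{u_i(w)f(w\inv s)}{w}$, the average of the continuous function $w\mapsto f(w\inv s)$ against the probability density $u_i$. Because the supports of the $u_i$ eventually lie in a fixed compact neighbourhood of $r$ and shrink to $\{r\}$, this tends to $f(r\inv s)$; the convergence is dominated, the outer integrand being bounded in absolute value by $\nrm[f]_\infty\,|g(s)|\,\nrm[a]\,\nrm[v]\,\nrm[v']$, which is integrable since $g$ has compact support, while $s\mapsto\ip{\act_{s\inv}(a)v}{v'}$ is continuous by the point-weak* continuity of $\act$. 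Passing to the limit in the outer integral gives $\integ[\grp]{\overline{g(s)}\,\ip{\act_{s\inv}(a)v}{v'}\,f(r\inv s)}{s}=\ip{\piact(a)\lreg_r\xi}{\eta}$, as required.

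I expect the main obstacle to be step (a): correctly recognising $C_{u_i}$ as an amplified integrated regular representation is where the precise kernel and crossed-product conventions (and, for non-unimodular $\grp$, the modular-function bookkeeping) have to be sorted out, and where the normalisation of $\lreg_r$ in the limit gets fixed. Granting that, together with the interchange of limit and integral in (b), the conclusion is forced by the positivity, unit total mass, and shrinking supports of the net $(u_i)$.
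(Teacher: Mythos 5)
Your handling of part (i) is essentially the paper's own argument: the paper's proof is precisely ``the matrix coefficients $\ip{T_{k_i}\xi}{\eta}$ converge and weak* agrees with the WOT on bounded sets,'' and you fill in the routine computation and the bound $\nrm[T_{k_i}]\le\nrm[a]$ correctly for unimodular $\grp$. One caveat: the modular function does not only enter the norm estimate in your step (a); it changes the limit in step (b) as well, since $\int_\grp u_i(st\inv)f(t)\,dt=\int_\grp u_i(w)\Delta(w\inv s)f(w\inv s)\,dw$, so for non-unimodular $\grp$ your substitution (and the statement as literally written) picks up a factor $\Delta(r\inv s)$; this is as much a conventions issue in the statement as in your proof, but the parenthetical remark in (a) understates it.

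The genuine gap is the claimed reduction of (ii) to (i). Applying (i) to the trivial action of $\grp$ on $\vNal\btens\Linf$ with the element $b=\act_{r\inv}(a)\otimes\actG_{r\inv}(\phi)$ produces operators on $\Ltwo[\grp , \HH_\vNal \otimes \Ltwo]$ converging to $b\otimes\lreg_r$, an operator on $\HH_\vNal\otimes\Ltwo\otimes\Ltwo$ in which the $\Linf$-leg and $\lreg_r$ act on \emph{different} copies of $\Ltwo$. The limit in (ii), $a\otimes\phi\lreg_r$, acts on $\HH_\vNal\otimes\Ltwo$ with the multiplication operator of $\phi$ composed with $\lreg_r$ on the \emph{same} copy. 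You assert these are ``the same element'' but supply no identification, and no fixed unitary or normal isomorphism can transport all the trivial-action limits $b\otimes\lreg_r$ onto the corresponding elements $a\otimes\phi\lreg_r$: already for $\vNal=\CC$ and $\grp$ abelian the limits in the trivial-action picture lie in the commutative algebra $\Linf\btens\vN$, while the operators $\phi\lreg_r$ do not commute with one another (they generate $\Bd[\Ltwo]$). Note also that the identification actually used in the paper (the map $\Psi$ of Lemma~\ref{le:SchurmultgivesHSmultofdual}) sends $a\otimes\phi\lreg_r$ to $\piact[\beta](a\otimes\phi)\lrega_r$ for the \emph{non-trivial} action $\beta=\tau\otimes\actG$, and the kernels (i) would then prescribe are $u_i(st\inv)\big(a\otimes\actG_{s\inv}(\phi)\big)$, not the $h_i$ of the statement; the $r\inv$-twists in $h_i$ are exactly the bookkeeping caused by the fact that the $\Linf$-leg of the kernel multiplies the very variable that $\lreg_r$ translates, and this interaction is lost once the action is declared trivial and the group variable is kept in a separate tensor factor. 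So part (ii) is not a formal special case of (i); it needs its own (short) computation of $\ip{T_{h_i}\xi}{\eta}$ against vectors of the Hilbert space on which $a\otimes\phi\lreg_r$ acts, which is what the paper's ``routine calculations'' refers to.
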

\begin{proof}
Routine calculations show $\ip{T_{k_i} \xi}{\eta} \to \ip{\piact(a) \lrega_r \xi}{\eta}$ and $\ip{T_{h_i} \xi}{\eta} \to \ip{(a \otimes \phi\lreg_r) \xi}{\eta}$ for all $\xi,\eta \in \Ltwo[\grp , \HH]$. 
The conclusion follows because the weak* topology coincides with the WOT on bounded sets.
\end{proof}

\begin{lemma}\label{le:SchurmultgivesHSmultofdual}
Let $S : \vNal \btens \Bd[\Ltwo] \to \vNal \btens \Bd[\Ltwo]$ be a Schur multiplier, $\tau$ the trivial action of $\grp$ on $\vNal$, $\beta \defeq \tau \otimes \actG$ and $\Psi : \vNcros{(\vNal \btens \Linf)}{\grp}{\beta} \to \vNal \btens \Bd[\Ltwo]$ the canonical isomorphism.
Then $\tilde{S} \defeq \Psi\inv \circ S \circ \Psi$ is a Herz--Schur multiplier of $\beta$, \textit{i.e.}\ $\piact[{\dcoact[\beta]}] \circ \tilde{S} = (\tilde{S} \otimes \id) \circ \piact[{\dcoact[\beta]}]$.
\end{lemma}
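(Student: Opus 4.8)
The plan is to transport the whole statement along the isomorphism $\Psi$ and reduce it to a one-line computation that uses only that a Schur multiplier is a normal $\Linf$-bimodule map. Since $\Psi$ is a $*$-isomorphism, $\tilde S = \Psi\inv\circ S\circ\Psi$ is automatically completely bounded and weak*-continuous, so only the intertwining identity remains; and conjugating it by $\Psi$ on the left and by $\Psi\inv$ on the right shows that $\piact[{\dcoact[\beta]}]\circ\tilde S = (\tilde S\otimes\id)\circ\piact[{\dcoact[\beta]}]$ is equivalent to
\begin{equation*}
 \Delta\circ S = (S\otimes\id_{\vN})\circ\Delta , \qquad \Delta\defeq(\Psi\otimes\id_{\vN})\circ\piact[{\dcoact[\beta]}]\circ\Psi\inv\colon\vNal\btens\Bd[\Ltwo]\longrightarrow\vNal\btens\Bd[\Ltwo]\btens\vN .
\end{equation*}
It therefore suffices to prove this last identity.

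The key step is to identify $\Delta$ concretely. Because $\beta=\tau\otimes\actG$ with $\tau$ trivial, the crossed product factors as $(\vNal\btens\Linf)\rtimes_\beta\grp=\vNal\btens(\Linf\rtimes_{\actG}\grp)$, the isomorphism $\Psi$ is $\id_\vNal\btens\theta_0$ with $\theta_0\colon\Linf\rtimes_{\actG}\grp\to\Bd[\Ltwo]$ the canonical isomorphism (carrying $\pi_{\actG}(\phi)$ to the multiplication operator $\phi$ and $\lambda_r$ to $\lambda_r$), and $\piact[{\dcoact[\beta]}]=\id_\vNal\btens\piact[{\dcoact[\actG]}]$ (see \cite{NaTa79}). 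Hence $\Delta=\id_\vNal\btens\delta_0$, where the coaction $\delta_0\colon\Bd[\Ltwo]\to\Bd[\Ltwo]\btens\vN$ transports the dual coaction $\dcoact[\actG]$, so that $\delta_0(\phi)=\phi\otimes I_{\Ltwo}$ for $\phi\in\Linf$ and $\delta_0(\lambda_r)=\lambda_r\otimes\lambda_r$. Consequently $\delta_0(T)=W(T\otimes I_{\Ltwo})W^*$, where $W=\int^{\oplus}_\grp\lambda_s\,ds$ --- the unitary $(W\xi)(s,t)=\xi(s,s\inv t)$ on $\Ltwo[\grp\times\grp]$ --- lies in $\Linf\btens\vN$ and satisfies $W(\phi\otimes I_{\Ltwo})W^*=\phi\otimes I_{\Ltwo}$ and $W(\lambda_r\otimes I_{\Ltwo})W^*=\lambda_r\otimes\lambda_r$ (these are exactly the two relations above, and they determine $\delta_0$ on the generators $\phi,\lambda_r$ of $\Bd[\Ltwo]$). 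Therefore $\Delta(X)=(I_{\HH_\vNal}\otimes W)(X\otimes I_{\Ltwo})(I_{\HH_\vNal}\otimes W)^*$, with $W\in\Linf\btens\vN$ occupying the last two tensor legs.

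Granting this, the identity is immediate. The map $S\otimes\id_{\vN}$ is normal on $\vNal\btens\Bd[\Ltwo]\btens\vN$, and it commutes with left and right multiplication by every operator $I_{\HH_\vNal}\otimes\phi\otimes I_{\Ltwo}$, $\phi\in\Linf$ (this is exactly the $\Linf$-bimodule property of $S$), and trivially with left and right multiplication by every $I_{\HH_\vNal}\otimes I_{\Ltwo}\otimes x$, $x\in\vN$; being normal, it therefore commutes with left and right multiplication by the von Neumann algebra generated by these, which contains $I_{\HH_\vNal}\otimes W$. Hence for every $X\in\vNal\btens\Bd[\Ltwo]$,
\begin{equation*}
 (S\otimes\id_{\vN})\bigl(\Delta(X)\bigr)=(I_{\HH_\vNal}\otimes W)(S\otimes\id_{\vN})\bigl(X\otimes I_{\Ltwo}\bigr)(I_{\HH_\vNal}\otimes W)^*=(I_{\HH_\vNal}\otimes W)\bigl(S(X)\otimes I_{\Ltwo}\bigr)(I_{\HH_\vNal}\otimes W)^*=\Delta\bigl(S(X)\bigr),
\end{equation*}
which is the required identity; together with complete boundedness and weak*-continuity of $\tilde S$ this shows $\tilde S$ is a Herz--Schur multiplier of $\beta$.

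I expect the only real work to be in the second paragraph: unwinding $\Psi$ and the dual coaction $\dcoact[\beta]$ and keeping the three tensor legs straight, and --- the crux --- checking that the transported dual coaction $\delta_0$ is \emph{trivial} on the $\Linf$-factor, so that its implementing unitary $W$ lies in $\Linf\btens\vN$ and not merely in $\Bd[\Ltwo]\btens\vN$; it is precisely this that lets the $\Linf$-bimodule property of $S$ interact with $\Delta$. Everything after that is formal, the one further point worth a word being the passage from ``$S\otimes\id$ commutes with multiplication by the generators $\Linf\otimes I_{\Ltwo}$ and $I_{\Ltwo}\otimes\vN$'' to ``$S\otimes\id$ commutes with multiplication by all of $\Linf\btens\vN$'', which is a routine weak*-density argument using normality of $S$.
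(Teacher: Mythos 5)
Your proof is correct, but it takes a genuinely different route from the paper. The paper's proof goes through the symbol machinery: it invokes Theorem~\ref{th:dilationSchurmults} to get a symbol $\varphi$ for $S$, uses the Dirac-family approximation of Lemma~\ref{le:useofDiracfam} to show that $S$ acts fibrewise, $S(a \otimes \phi\lreg_r) = \varphi_r(a\otimes\phi)\lreg_r$ with an explicit $\varphi_r$ built from $\varphi$ and $\beta$, and then verifies the intertwining identity by direct computation on the generators $\piact[\beta](a\otimes\phi)\lrega_r$ of the crossed product. You instead transport the dual coaction through $\Psi$, identify it as $X \mapsto (I_{\HH_\vNal}\otimes W)(X\otimes I)(I_{\HH_\vNal}\otimes W)^*$ with the fundamental unitary $W \in \Linf \btens \vN$, and observe that $S\otimes\id$ is a normal bimodule map over the von Neumann algebra generated by $1\otimes\Linf\otimes 1$ and $1\otimes 1\otimes\vN$ (the generating set is self-adjoint, so the routine weak*-density argument you mention does go through), hence commutes with conjugation by $I_{\HH_\vNal}\otimes W$. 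Your argument is symbol-free and avoids both Theorem~\ref{th:dilationSchurmults} and Lemma~\ref{le:useofDiracfam}, using only that a Schur multiplier is a normal $\Linf$-bimodule map; it is essentially the covariance argument familiar from the quantum-group setting (cf.\ Junge--Neufang--Ruan), and so is closer in spirit to the generalisations the paper mentions as future work. What the paper's computation buys in exchange is the explicit fibrewise form of $S$ on the crossed product and the concrete formula for its symbol $\varphi_r$, information your structural argument does not produce. The identifications you rely on --- $\Psi = \id_\vNal \btens \theta_0$, the factorisation of the dual coaction over the tensor legs, and $W \in \Linf\btens\vN$ --- are standard and consistent with how $\Psi$ is used in the paper, so no gap there.
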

\begin{proof}
Let $\varphi$ be the symbol of $S$, obtained in Theorem~\ref{th:dilationSchurmults}.
It is straightforward to check, using Lemma~\ref{le:useofDiracfam}, that for $r \in \grp$ we have $S(a \otimes \phi\lreg_r) = \varphi_r(a \otimes \phi) \lreg_r$, where $\varphi_r : \grp \to \CBw[\vNal \btens \Linf]$ is given by $\varphi_r(s)(x) \defeq \varphi(s,r\inv s)(\beta_{r\inv}(x))$.
Now we calculate, for $a \in \vNal$ and $\phi \in \Linf$, 
\[
\begin{split}
    \piact[{\dcoact[{\beta}]}] \circ \tilde{S} \big( \piact[\beta](a \otimes \phi) \lrega_r \big) &= \piact[{\dcoact[{\beta}]}] \circ \Psi\inv \big( \varphi_r(a \otimes \phi) \lrega_r \big) = \piact[{\dcoact[{\beta}]}] \Big( \piact[\beta] \big( \varphi_r(a \otimes \phi) \big) \lrega_r \Big) \\
        &= \piact[\beta] \big(\varphi_r(a \otimes \phi) \big) \lrega_r \otimes \lreg_r = \tilde{S} \big( \piact[\beta](a \otimes \phi) \lrega_r \big) \otimes \lreg_r \\
        &= (\tilde{S} \otimes \id) \circ \piact[{\dcoact[{\beta}]}] \big( \piact[\beta](a \otimes \phi) \lrega_r \big) ,
\end{split}
\]
which proves the claim.
\end{proof}

First we have a version of the transference theorem (see also \cites{BF84,MTT18,JNR09}). 

\begin{proposition}\label{pr:HSmultsareSmultsofdualcoact}
Let $\act$ be an action of $\grp$ on $\vNal$ and $S$ a Herz--Schur multiplier of $\act$ with symbol $F : \grp \to \CBw$.
Then $\overline{S}$ is a Schur multiplier of $\dcoact$ with symbol $\varphi(s,t)(a) = \act_{t\inv} (F(t s\inv)(\act_t (a)))$.
Moreover, if $\vNal$ has an $\module$-module structure satisfying (\ref{eq:moduleactioninteractswithaction}) and $F(r)$ is an $\module$-module map for all $r \in \grp$ then $\varphi(s,t)$ is an $\module$-module map for all $s,t \in \grp$, so $\overline{S}$ is also an $\module$-module map.
\end{proposition}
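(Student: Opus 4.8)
The plan is to deduce both assertions from the explicit form (\ref{eq:secdualactidentif}) of the identification $\Phi$, the defining relation (\ref{eq:HerzSchurmultcondition}) of a Herz--Schur multiplier, the dilation theorem (Theorem~\ref{th:dilationSchurmults}), and the approximation in Lemma~\ref{le:useofDiracfam}. First I would check that $\overline{S} \defeq \Phi\inv \circ (S\otimes\id)\circ\Phi$ really is a Schur multiplier of $\dcoact$, \textit{i.e.}\ a Schur $\grp$-multiplier of $\vNal$. Complete boundedness and weak*-continuity pass from $S$ to $\overline{S}$, as already noted in the text. For the $\Linf$-bimodule property: relation (\ref{eq:HerzSchurmultcondition}), namely $(S\otimes\id)\circ\picoact[\dcoact] = \picoact[\dcoact]\circ S$, shows that $S\otimes\id$ restricts to an endomorphism of $\vNcocros{\big(\vNcrs\big)}{\grp}{\dcoact}$ which fixes, and is a bimodule map over, the copy of $\Linf$ generating that crossed product; transporting along $\Phi$ — which by (\ref{eq:secdualactidentif}) carries this $\Linf$ onto the $\Linf$ appearing in Definition~\ref{de:Schurmultmodule} — one concludes that $\overline{S}$ is an $\Linf$-bimodule map. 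Hence $\overline{S}$ is a Schur $\grp$-multiplier of $\vNal$, and by Theorem~\ref{th:dilationSchurmults} it equals $S_\psi$ for some $\psi:\grp\times\grp\to\CBw$.

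To pin down $\psi$, I would compute $\overline{S}(\piact(a)\lrega_r)$ in two ways. On one side, unwinding $\overline{S}=\Phi\inv(S\otimes\id)\Phi$ using (\ref{eq:secdualactidentif}), the dual-coaction formula $\picoact[\dcoact](\piact(a)\lrega_r)=\piact(a)\lrega_r\otimes\lreg_r$, relation (\ref{eq:HerzSchurmultcondition}), and the symbol $F$ of $S$ (so $S(\piact(a)\lrega_r)=\piact(F(r)(a))\lrega_r$), gives $\overline{S}(\piact(a)\lrega_r)=\piact(F(r)(a))\lrega_r$. On the other side, Lemma~\ref{le:useofDiracfam}(i) gives $T_{k_i}\stackrel{w^*}{\to}\piact(a)\lrega_r$ with $k_i(s,t)=u_i(st\inv)\act_{s\inv}(a)$, so by weak*-continuity $T_{\psi\cdot k_i}\stackrel{w^*}{\to}\overline{S}(\piact(a)\lrega_r)$, where $(\psi\cdot k_i)(s,t)=u_i(st\inv)\,\psi(t,s)(\act_{s\inv}(a))$. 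Comparing the two limits — the support of $u_i$ concentrating at $r$ pins the limiting kernel to the ``diagonal'' $t=r\inv s$ — yields $\psi(r\inv s,s)(\act_{s\inv}(a))=\act_{s\inv}(F(r)(a))$ for almost every $s$; applying $\act_s$ and substituting $p=r\inv s$, $q=s$ (so $r=q p\inv$) produces $\psi(p,q)(a)=\act_{q\inv}\big(F(q p\inv)(\act_q(a))\big)$, which is the claimed formula for $\varphi$. A short Fubini-type argument upgrades ``for each $r$, almost every $s$'' to ``for almost every $(s,t)$.''

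For the ``moreover'', assume $\vNal$ carries an $\module$-module structure satisfying (\ref{eq:moduleactioninteractswithaction}) with each $F(r)$ an $\module$-module map. By the last sentence of Theorem~\ref{th:dilationSchurmults}, $\overline{S}=S_\varphi$ is an $\module$-module map precisely when $\varphi(s,t)$ is an $\module$-module map for almost every $s,t$; and, as noted after (\ref{eq:modactionliftedtocross}), condition (\ref{eq:moduleactioninteractswithaction}) guarantees that the $\module$-module structure in question is the natural one of Definition~\ref{de:Schurmultmodule}. So it suffices to check $\varphi(s,t)(b\cdot a)=b\cdot\varphi(s,t)(a)$ for fixed $s,t\in\grp$, $a\in\vNal$, $b\in\module$: starting from $\varphi(s,t)(b\cdot a)=\act_{t\inv}\big(F(ts\inv)(\act_t(b\cdot a))\big)$, move $b$ past $\act_t$ by (\ref{eq:moduleactioninteractswithaction}), then past $F(ts\inv)$ because it is an $\module$-module map, then past $\act_{t\inv}$ by (\ref{eq:moduleactioninteractswithaction}) again, arriving at $b\cdot\act_{t\inv}\big(F(ts\inv)(\act_t(a))\big)=b\cdot\varphi(s,t)(a)$; a right module action, if present, is treated identically since (\ref{eq:moduleactioninteractswithaction}) is assumed for the full (bi)module structure. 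Thus every $\varphi(s,t)$ is an $\module$-module map, and Theorem~\ref{th:dilationSchurmults} gives that $\overline{S}$ is an $\module$-module map.

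I expect the main obstacle to lie in the second paragraph, in identifying $\psi$ with $\varphi$: one has to track the three Hilbert-space tensor factors implicit in the presentation of $\vNal\btens\Bd[\Ltwo]$ as the iterated crossed product in order to evaluate $\overline{S}$ on the generators $\piact(a)\lrega_r$, and then extract the almost-everywhere-defined symbol from an identity that holds only for each $r$ and almost every $s$. The ``moreover'' assertion, by contrast, is a routine verification once the symbol is available, reflecting that $\varphi(s,t)=\act_{t\inv}\circ F(ts\inv)\circ\act_t$ is a composite of $\module$-module maps under the stated hypotheses.
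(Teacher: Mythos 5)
Your proposal is correct and follows essentially the same route as the paper: the $\Linf$-bimodule property of $\overline{S}$ is obtained by transporting through $\Phi$ using (\ref{eq:secdualactidentif}) and (\ref{eq:HerzSchurmultcondition}), the symbol is extracted exactly as in the paper via the Dirac family of Lemma~\ref{le:useofDiracfam} and weak*-continuity (your substitution $p=r\inv s$, $q=s$ reproduces the paper's identity $\varphi(r\inv t,t)(\act_{t\inv}(a))=\act_{t\inv}(F(r)(a))$), and the ``moreover'' is the same direct computation with (\ref{eq:moduleactioninteractswithaction}) combined with the module statement of Theorem~\ref{th:dilationSchurmults}. One small inaccuracy: $S\otimes\id$ need not \emph{fix} the copy $I\otimes I\otimes\Linf$ (that would require $S(1_{\vNcrs})=1$), but this is harmless since your argument only uses that $S\otimes\id$ is an $\Linf$-bimodule map over that copy, which is true; the paper sidesteps the point by verifying the bimodule relation directly on the elements $\piact(a)\lrega_r(I_{\HH_\vNal}\otimes\phi)$ and invoking linearity and weak*-continuity.
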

\begin{proof}
Let $S$ be a Herz--Schur multiplier of $\act$. 
For $a \in \vNal$, $r \in \grp$ and $\phi \in \Linf$ we have
\begin{equation*}\label{eq:calcHSisSchuondualcoact}
\begin{split}
    \overline{S} \Big( \big( \piact(a) \lrega_r \big) (I_{\HH_\vNal} \otimes \phi) \Big) &= \Phi\inv \circ (S \otimes \id ) \big( \piact(a) \lrega_r \otimes \lreg_r \phi) \\
    &= \Phi\inv \Big( \piact \big( F(r)(a) \big) \lrega_r \otimes \lreg_r \phi \Big) \\
    &= \Big( \overline{S} \big( \piact(a) \lrega_r \big) \Big) (I_{\HH_\vNal} \otimes \phi) ;
\end{split}
\end{equation*}
similarly $\overline{S}$ commutes with left multiplication by $\Linf[\grp]$.
That $\overline{S}$ is a Schur multiplier follows by linearity and weak*-continuity.

To calculate the symbol $\varphi$ associated to the Schur multiplier $\overline{S}$ fix $a \in \vNal$ and $r \in \grp$.
For $k \in \Ltwo[\grp \times \grp , \vNal]$ we define $k^r : \grp \to \vNal$ by $k^r(p) \defeq k(p , r\inv p)$. 
Let $(u_i)_{i \in I}$ and $(k_i)_{i \in I}$ be as in Lemma~\ref{le:useofDiracfam}.
Similarly one checks that $(k^r_i)_{i \in I}$ converge to $\piact(a)$ in the weak* topology of $\Linf[\grp , \vNal]$.
Since $\overline{S}(T_{k_i}) \to \piact(F(r) (a)) \lrega_r$ we have
\[
\begin{split}
    \act_{t\inv} \big( F(r)(a) \big) &= \piact \big( F(r)(a) \big)(t) = \lim_i (\varphi \cdot k_i)^r (t) = \lim_i (\varphi \cdot k_i) (t , r\inv t) \\
        &= \varphi(r\inv t , t) \big( k_i(t , r\inv t) \big) = \varphi(r\inv t , t) \big( \act_{t\inv}(a) \big) .
\end{split}
\]
The claimed identity follows. 


The statement about module maps is an easy calculation using (\ref{eq:moduleactioninteractswithaction}).
\end{proof}

The following result characterises the Herz--Schur multipliers of $\act$ among the Schur multipliers of $\dcoact$. 
We identify $\dact[\dcoact]$ with the action $\act \otimes \Ad \rreg$ as in (\ref{eq:secdualactidentif}).

\begin{theorem}\label{th:HSmultsinSchmults}
Let $\act$ be an action of $\grp$ on $\vNal$ and $R$ a Schur multiplier on $\vNal \btens \Bd[\Ltwo]$.
The following are equivalent:
\begin{enumerate}[i.]
    \item $\piact[{\dact[\dcoact]}] \circ R = (R \otimes \id) \circ \piact[{\dact[\dcoact]}]$; 
    \item $R = \overline{S}$ for some Herz--Schur multiplier $S$ of $\act$.
\end{enumerate}
Moreover, if $\vNal$ has an $\module$-module structure satisfying (\ref{eq:moduleactioninteractswithaction}) then $R$ is an $\module$-module map if and only if $S$ is.
\end{theorem}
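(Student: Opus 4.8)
The plan is to run the whole argument at the level of symbols. By Theorem~\ref{th:dilationSchurmults} the Schur multiplier $R$ is $S_\varphi$ for a bounded $\varphi : \grp \times \grp \to \CBw$, and by Proposition~\ref{pr:HSmultsareSmultsofdualcoact} the multipliers of the form $\overline{S}$ are precisely those whose symbol has the shape $\varphi(s,t)(a) = \act_{t\inv}(F(ts\inv)(\act_t(a)))$; so the theorem amounts to singling out this shape among all symbols. First I would unwind (i): identifying $\dact[\dcoact]$ with $\act \otimes \Ad \rreg$ as in (\ref{eq:secdualactidentif}) and using that $\piact[{\dact[\dcoact]}](X)(s) = (\act_{s\inv} \otimes \Ad \rreg_{s\inv})(X)$, evaluating (i) at each $s$ shows that (i) is equivalent to $R$ commuting with the automorphism $\gamma_s \defeq \act_s \otimes \Ad \rreg_s$ for every $s \in \grp$.

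Next I would convert this into a condition on $\varphi$. A computation with the operators $T_k$ of (\ref{eq:Tkdef}) gives $\gamma_s(T_k) = T_{k^{(s)}}$ with $k^{(s)}(y,x) = \Delta(s)\act_s(k(ys,xs))$, where $\Delta$ is the modular function; feeding this into $R(T_k) = T_{\varphi \cdot k}$ and comparing kernels turns $R\gamma_s = \gamma_s R$ into the covariance identity
\[
    \varphi(sg,tg)(a) = \act_{g\inv}\big( \varphi(s,t)(\act_g(a)) \big) , \quad a \in \vNal ,\ g \in \grp ,
\]
valid for almost all $(s,t)$. Direction (ii)$\implies$(i) is then immediate: substituting the special shape of the symbol of $\overline{S}$ from Proposition~\ref{pr:HSmultsareSmultsofdualcoact} into both sides shows the identity holds, so $R = \overline{S}$ satisfies (i).

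For (i)$\implies$(ii) I would use the covariance identity to manufacture $F$. Any two pairs $(s,t),(s',t')$ with $ts\inv = t's'\inv$ differ by a common right translate, so the identity shows that $F(r)(a) \defeq \act_t(\varphi(s,t)(\act_{t\inv}(a)))$ is independent of the choice of $(s,t)$ with $ts\inv = r$, and that with this $F$ the symbol $\varphi$ acquires the special shape. Setting $S(\piact(a)\lrega_r) \defeq \piact(F(r)(a))\lrega_r$ and comparing with $\Phi R \Phi\inv$ on generators (via (\ref{eq:secdualactidentif})) shows that $S \otimes \id$ agrees with the completely bounded weak*-continuous map $\Phi R \Phi\inv$ on a weak*-dense subalgebra, so $S$ extends to a completely bounded weak*-continuous map on $\vNcrs$ (recovered from $S \otimes \id$ by slicing). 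Since $S(\piact(a)\lrega_r) \in \piact(\vNal)\lrega_r$, Remark~\ref{re:HSmultnotes}(iii)--(iv) shows $S$ commutes with $\dcoact$, so $S$ is a Herz--Schur multiplier of $\act$; and $\overline{S}$ and $R$ are Schur multipliers with the same symbol $\varphi$, hence equal since a Schur multiplier is determined by its symbol (Theorem~\ref{th:dilationSchurmults}).

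For the moreover clause, (\ref{eq:moduleactioninteractswithaction}) makes each $\act_g$ an $\module$-module map, so the relation $F(r) = \act_t \circ \varphi(s,t) \circ \act_{t\inv}$ transfers the module property between $\varphi$ and $F$ in both directions; together with the equivalences ``$R$ a module map $\iff \varphi(s,t)$ a module map for almost all $(s,t)$'' (Theorem~\ref{th:dilationSchurmults}) and ``$S$ a module map $\iff F(r)$ a module map for all $r$'' (the computation following (\ref{eq:modactionliftedtocross})), this gives that $R$ is an $\module$-module map if and only if $S$ is. The step I expect to be the main obstacle is the first translation: computing the action of $\gamma_s$ on the kernels $k$ with correct bookkeeping of $\Delta$ and the right regular representation, and justifying --- through weak*-density of the $T_k$ and the merely almost-everywhere defined nature of $\varphi$ --- that the operator identity $R\gamma_s = \gamma_s R$ is genuinely equivalent to the pointwise covariance identity. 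Once that is secured, the reconstruction of $F$, the identification $\overline{S} = R$, and the module bookkeeping are routine.
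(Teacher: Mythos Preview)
Your argument is correct, but it follows a different path from the paper's.

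The paper never touches symbols in this proof. For (i)$\implies$(ii) it uses the structural fact that a map commuting with $\dact[\dcoact]$ automatically preserves the fixed-point algebra of $\dact[\dcoact]$, which is $\vNcrs$; so $S$ is obtained immediately as the restriction $R|_{\vNcrs}$, already completely bounded and weak*-continuous. To see that this restriction commutes with $\dcoact$, the paper invokes Lemma~\ref{le:SchurmultgivesHSmultofdual} (every Schur multiplier is a Herz--Schur multiplier of the auxiliary action $\beta = \tau \otimes \actG$) together with the observation that $\picoact[\dcoact[\beta]]$, transported through $\Psi$, restricts to $\picoact[\dcoact]$ on $\vNcrs$; a short chain of equalities then gives $\picoact[\dcoact] \circ R = (R \otimes \id) \circ \picoact[\dcoact]$. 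Direction (ii)$\implies$(i) and the module clause are handled by direct computations on generators, using the symbol $F$ of $S$ only.

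Your route is the Bo\.zejko--Fendler style: reduce (i) to commutation with each $\gamma_s = \act_s \otimes \Ad\rreg_s$, translate that into the covariance identity $\varphi(sg,tg) = \act_{g\inv}\circ\varphi(s,t)\circ\act_g$ on the Schur symbol, and rebuild $F$ from $\varphi$. This is more explicit and makes the link with Proposition~\ref{pr:HSmultsareSmultsofdualcoact} completely transparent; it also avoids Lemma~\ref{le:SchurmultgivesHSmultofdual} entirely. The price is exactly the obstacle you flag: the symbol $\varphi$ is only defined almost everywhere, and for fixed $r$ the fibre $\{ts\inv = r\}$ is null, so extracting a well-defined $F(r)$ from the a.e.\ covariance identity needs a Fubini/regularisation step (for a.e.\ $(s,t)$ the identity holds for a.e.\ $g$, which suffices). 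The paper's fixed-point argument sidesteps this measure-theoretic bookkeeping completely and is better suited to the quantum-group generalisation the introduction alludes to; your approach, once that step is secured, gives the concrete formula for $F$ for free.
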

\begin{proof}
(i)$\implies$(ii) Since $R$ commutes with $\dact[\dcoact]$ we deduce that $R$ defines a map on the fixed points of this action, which can be identified with $\vNcrs$ (see {\it e.g.}\ \cite[Theorem II.1.1]{NaTa79}).
Observe that $(\Psi \otimes \id) \circ \piact[{\dcoact[{\beta}]}] \circ \Psi\inv$ restricts to the coaction $\piact[\dcoact]$ of $\grp$ on $\vNcrs$. 
Now calculate, using $\Psi \tilde{R} = R \Psi$ and Lemma~\ref{le:SchurmultgivesHSmultofdual},
\[
\begin{split}
    \piact[\dcoact] \circ R &= (\Psi \otimes \id) \circ \piact[{\dcoact[{\beta}]}] \circ \Psi\inv \circ R = (\Psi \otimes \id) \circ \piact[{\dcoact[{\beta}]}] \circ \tilde{R} \circ \Psi\inv \\
        &= (\Psi \otimes \id) \circ (\tilde{R} \otimes \id) \circ \piact[{\dcoact[{\beta}]}] \circ \Psi\inv = (R \otimes \id) \circ (\Psi \otimes \id) \circ \piact[{\dcoact[{\beta}]}] \circ \Psi\inv \\
        &= (R \otimes \id) \circ \piact[\dcoact] .
\end{split}
\]
Hence the restriction of $R$ to $\vNcrs$ is a Herz--Schur multiplier.

(ii)$\implies$(i) Suppose $S$ is a Herz--Schur multiplier of $\act$ with symbol $F$. Then, for any $a \in \vNal,\ r \in \grp,\ \phi \in \Linf$, using the equivalent of (\ref{eq:secdualactidentif}) for dual actions \cite[Theorem 2.7]{NaTa79},
\begin{equation*}\label{eq:HSisinvariantSchurcalc}
\begin{split}
    (\overline{S} \otimes \id) \circ \piact[{\dact[\dcoact]}] \big( \piact(a) \lrega_r (I_{\HH_\vNal} \otimes \phi) \big) &= (\overline{S} \otimes \id) \Big( \big( \piact(a) \lrega_r \otimes I_{L^2(\grp)} \big) \big( I_{\HH_\vNal} \otimes \piact[\actG](\phi) \big) \Big) \\
        &= \Big( \piact \big( F(r)(a) \big) \lrega_r \otimes I_{L^2(\grp)} \Big) \big( I_{\HH_\vNal} \otimes \piact[\actG](\phi) \big) \\
        &= \piact[{\dact[\dcoact]}] \left( \Big( \piact \big( F(r)(a) \big) \lrega_r \Big) ( I_{\HH_\vNal} \otimes \phi ) \right) \\
        &= \piact[{\dact[\dcoact]}] \circ \overline{S} \big( \piact(a) \lrega_r (I_{\HH_\vNal} \otimes \phi) \big) ,
\end{split}
\end{equation*}
so the claim follows by linearity and continuity.

If $S$ is a module map then $\overline{S}$ is also a module map by Proposition~\ref{pr:HSmultsareSmultsofdualcoact}.
On the other hand, if $\overline{S}$ is a module map then
\[
\begin{split}
    S \big( b \cdot \piact(a) \lrega_r \big) = \overline{S} \big( (b \otimes I_{L^2(\grp)}) \cdot \piact(a) \lrega_r \big) &= (b \otimes I_{L^2(\grp)}) \cdot \overline{S} \big( \piact(a) \lrega_r \big) \\ 
        &= b \cdot S \big( \piact(a) \lrega_r \big) ,
\end{split}
\]
so $S$ is also a module map.
\end{proof}

\begin{remark}\label{re:classicaltransference}
{\rm 
When $\vNal = \CC$ and $\act$ is trivial the above results recover the known fact \cite{BF84} that a Schur multiplier $S$ on $\Bd[\Ltwo]$ restricts to a Herz--Schur multiplier on $\vN$ if and only if $S$ commutes with the action $\Ad \rho$ (the second dual of the trivial action).
In this classical case Lemma~\ref{le:SchurmultgivesHSmultofdual} states that every Schur multiplier of $\Bd[\Ltwo]$ can be identified with a Herz--Schur multiplier of $\actG$. 
}
\end{remark}

In this section we have been careful to keep track of multipliers which respect an additional module structure. 
The reason is that the Herz--Schur multipliers of a semidirect product $H \rtimes G$ have an obvious identification with Herz--Schur multipliers of $\vNcros{\vN[H]}{G}{}$, and become $\falg[H]$-module maps under this identification.
In the next section we will make use of multipliers respecting this extra module structure.

\section{Abelian Groups}
\label{sec:abeliangrooups}

\noindent 
We now assume that $\grp$ is abelian, with dual group $\dgrp$.
By Takai duality $\Bd[\Ltwo[\grp]]$ is isomorphic to the crossed product formed by the coaction $\coactG$ dual to the trivial action of $\grp$ on $\CC$, or the action $\actG$ dual to the trivial action of $\dgrp$ on $\CC$.
For a map $S$ on $\Bd[\Ltwo]$ we write $S^{\actG}$ for the corresponding map on $\vNcros{\vN[\dgrp]}{\grp}{\actG}$ and $S^{\coactG}$ for the corresponding map on $\vNcros{\vN}{\dgrp}{\coactG}$, so for example $S^{\coactG} = \Phi \circ S \circ \Phi\inv$.

In \cite[Section 6]{MTT18} we raised the question of how the Herz--Schur multipliers of $\grp$ acting on $\vN[\dgrp]$ are related to $\FSalg \odot \FSalg[\dgrp]$; note that the convolution multipliers considered there are precisely those appearing in (i) below. 

\begin{theorem}\label{th:FSalgmodulemultipliers}
Let $S$ be a completely bounded, weak*-continuous map on $\Bd[\Ltwo]$.
The following are equivalent:
\begin{enumerate}[i.]
    \item $S^{\actG}$ is a Herz--Schur multiplier of $\actG$ and is an $\falg[\dgrp]$-module map;
    \item $S^{\coactG}$ is a Herz--Schur multiplier of $\coactG$ and is an $\falg$-module map.
\end{enumerate}
Moreover, the set of all $S$ satisfying the equivalent conditions can be identified with the space $\FSalg[\dgrp \times \grp]$. 
\end{theorem}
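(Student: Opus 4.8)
The plan is to show that each of (i) and (ii) is equivalent to the single, manifestly symmetric condition that $S$, regarded as a map on $\Bd[\Ltwo]$, acts \emph{diagonally} on the operators $\chi\lreg_r$ --- here $\chi$ is a character of $\grp$, viewed as a multiplication operator in $\Linf$, and $r\in\grp$ --- in the sense that $S(\chi\lreg_r)=F(\chi,r)\,\chi\lreg_r$ for some scalar function $F$ on $\dgrp\times\grp$; and then to identify the functions $F$ arising in this way with $\FSalg[\dgrp\times\grp]$. Throughout one uses that $\mathrm{span}\{\chi\lreg_r:\chi\in\dgrp,\ r\in\grp\}$ is weak*-dense in $\Bd[\Ltwo]$, so that a weak*-continuous $S$ is determined by the numbers $F(\chi,r)$.

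To unwind (i): identifying $\vN[\dgrp]$ with $\Linf$, the action $\actG$ becomes translation of $\grp$ on $\Linf$, so by Remarks~\ref{re:HSmultnotes}(iv) a Herz--Schur multiplier $S^{\actG}$ of $\actG$ carries a symbol $F_1\colon\grp\to\mathcal{CB}_\sigma(\vN[\dgrp])$ with $S^{\actG}(\piact[\actG](a)\lreg_r)=\piact[\actG](F_1(r)(a))\lreg_r$. Requiring in addition that $S^{\actG}$ be an $\falg[\dgrp]$-module map amounts, as in the discussion after Remarks~\ref{re:HSmultnotes}, to requiring each $F_1(r)$ to be a weak*-continuous, completely bounded $\falg[\dgrp]$-module map on $\vN[\dgrp]=\Linf$; after the Fourier transform $\falg[\dgrp]$ acts on $\Linf$ by convolution, so such a map is precisely convolution by a measure in $M(\grp)$, and $M(\grp)\cong\FSalg[\dgrp]$ isometrically. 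Since characters of $\grp$ are eigenvectors for convolution, this says exactly that $S$ is diagonal on the $\chi\lreg_r$ with $F(\,\cdot\,,r)\in\FSalg[\dgrp]$ for every $r$. The same argument with $\grp$ and $\dgrp$ interchanged ($\coactG$ now being translation of $\dgrp$ on $\vN=\Linf[\dgrp]$) shows (ii) is equivalent to $S$ being diagonal on the $\chi\lreg_r$ with $F(\chi,\,\cdot\,)\in\FSalg$ for every $\chi$. Finally, mere diagonality already forces both side-conditions: for fixed $r$ the map $x\mapsto S(x\lreg_r)\lreg_r\inv$ is a weak*-continuous, completely bounded Fourier multiplier of $\vN[\dgrp]$ with symbol $F(\,\cdot\,,r)$, hence $F(\,\cdot\,,r)\in\FSalg[\dgrp]$ by amenability of $\dgrp$ --- symmetrically $F(\chi,\,\cdot\,)\in\FSalg$ --- while a diagonal $S$ visibly commutes with the module actions above. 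Thus (i)$\Longleftrightarrow$(ii)$\Longleftrightarrow$[$S$ is completely bounded, weak*-continuous and diagonal on $\{\chi\lreg_r\}$].

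For the ``moreover'' I claim $S\mapsto F$ is an isometric bijection, for the completely bounded norm, of the set of such $S$ onto $\FSalg[\dgrp\times\grp]$; injectivity is the density remark. The $\chi\lreg_r$ constitute an irreducible projective unitary representation $w$ of $\dgrp\times\grp$ on $\Ltwo$, with the Heisenberg cocycle $\omega$, whose twisted group \CA\ is $\cpt[\Ltwo]$, and a short computation --- one implication using irreducibility of $w$ --- shows $S$ is diagonal on $w$ if and only if $S$ commutes with $\Ad w(g)$ for every $g$. Given $F\in\FSalg[\dgrp\times\grp]$, write $F(g)=\ip{\pi(g)\xi}{\eta}$ with $\pi$ a unitary representation of $\dgrp\times\grp$ on $\HH_\pi$; then $\pi\otimes w$ and $I_{\HH_\pi}\otimes w$ are $\omega$-projective representations on $\HH_\pi\otimes\Ltwo$, unitarily equivalent since both are amplifications of the unique irreducible representation of $\cpt[\Ltwo]$, so sandwiching the amplification $I_{\HH_\pi}\otimes(\,\cdot\,)$ by an implementing unitary yields a completely bounded, weak*-continuous $S$ on $\Bd[\Ltwo]$, diagonal on $w$ with symbol $F$ and of completely bounded norm $\le\nrm[\xi]\,\nrm[\eta]$; hence $\nrm[S]_{\mathrm{cb}}\le\nrm[F]_{\FSalg[\dgrp\times\grp]}$. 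Conversely, since $S$ commutes with every $\Ad w(g)$, a $w$-covariant version of the completely bounded dilation theorem furnishes a normal representation $\theta$ of $\Bd[\Ltwo]$, a unitary representation $u$ of $\dgrp\times\grp$, and operators $V_0,W_0$ with $S=W_0^*\theta(\,\cdot\,)V_0$, $\theta$ covariant for the actions $\Ad w$ and $\Ad(u\otimes w)$, and $V_0w(g)=(u(g)\otimes w(g))V_0$, $W_0w(g)=(u(g)\otimes w(g))W_0$; here $u$ is forced to be genuine rather than projective, since an intertwiner can relate only representations with the same cocycle. Feeding $w(g)$ through the dilation and cancelling the unitary $w(g)$ gives $F(g)\,I_{\Ltwo}=W_0^*(u(g)^*\otimes I_{\Ltwo})V_0$, which displays $F$ as a coefficient function of $u$, whence $\nrm[F]_{\FSalg[\dgrp\times\grp]}\le\nrm[S]_{\mathrm{cb}}$. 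The two norm inequalities and the surjectivity from the first construction give the isometric identification; alternatively, once diagonality on $w$ is established one may invoke the Bo\.zejko--Fendler transference theorem \cite{BF84} for the amenable group $\dgrp\times\grp$.

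The step I expect to be the main obstacle is the $w$-covariant dilation together with the cocycle bookkeeping that shows $u$ to be genuine, so that $F$ indeed lies in $\FSalg[\dgrp\times\grp]$ rather than in a twisted Fourier--Stieltjes algebra. As a weak*-dense supply of examples and a consistency check, note by Remarks~\ref{re:HSmultnotes}(vi) that every element of $\FSalg\odot\FSalg[\dgrp]$ satisfies the equivalent conditions, an elementary tensor $a\otimes b$ (with $a\in\FSalg$, $b\in\FSalg[\dgrp]$) corresponding under $S\mapsto F$ to $(\chi,r)\mapsto b(\chi)\,a(r)$ in $\FSalg[\dgrp\times\grp]$.
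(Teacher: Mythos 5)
Your reduction of (i) and (ii) to the symmetric condition $S(\mul_\chi\lreg_r)=F(\chi,r)\,\mul_\chi\lreg_r$ is sound, and is essentially a repackaging of what the paper does (it extracts the same diagonality from the symbol of $S^{\actG}$, using that weak*-continuous completely bounded $\falg[\dgrp]$-module maps on $\vN[\dgrp]$ are Fourier multipliers); so the equivalence (i)$\Leftrightarrow$(ii) is fine. The genuine gap is in the ``moreover'' part, which is the substance of the theorem. For the inclusion ``$S$ diagonal $\Rightarrow F\in\FSalg[\dgrp\times\grp]$'' your argument rests entirely on a ``$w$-covariant version of the completely bounded dilation theorem'' together with the cocycle bookkeeping that makes the auxiliary representation $u$ genuine; you state this tool, do not prove it, and yourself flag it as the expected main obstacle. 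The pointwise facts you do establish, $F(\cdot,r)\in\FSalg[\dgrp]$ and $F(\chi,\cdot)\in\FSalg$, say nothing about joint membership, and the suggested fallback to Bo\.zejko--Fendler does not apply as stated: $S$ acts on $\Bd[\Ltwo]$, not as an $\Linf[\dgrp\times\grp]$-bimodule map on $\mathcal{B}(L^2(\dgrp\times\grp))$ invariant under right translation by $\dgrp\times\grp$, and transporting the problem into that setting is precisely the nontrivial step. The paper does this with tools it has already proved: by Proposition~\ref{pr:HSmultsareSmultsofdualcoact} the map $\overline{S^{\actG}}$ is a Schur multiplier on $\Linf\btens\Bd[\Ltwo]$ (this rests on Theorem~\ref{th:dilationSchurmults}), its restriction to the weak*-closed subalgebra $\Linf\btens\vN\cong\vN[\dgrp\times\grp]$ is completely bounded and weak*-continuous, and the diagonal formula $\overline{S^{\actG}}(\mul_\gamma\otimes\lreg_r)=F(\gamma,r)\,\mul_\gamma\otimes\lreg_r$ exhibits this restriction as an $\falg[\dgrp\times\grp]$-module map, i.e.\ a Herz--Schur multiplier of the abelian group $\dgrp\times\grp$, hence an element of $\FSalg[\dgrp\times\grp]$ by amenability. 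Without an argument of this kind, or an actual proof of your covariant dilation, the key inclusion is unproved.

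The converse direction also has a soft spot: the assertion that $\pi\otimes w$ and $I_{\HH_\pi}\otimes w$ are unitarily equivalent ``since both are amplifications of the unique irreducible representation'' is not a proof. Stone--von~Neumann gives that each is a multiple of $w$, but unitary equivalence requires equal multiplicities, and the multiplicity of $\pi\otimes w$ is not determined by a dimension count (multiples of $w$ of multiplicity one and of multiplicity two live on Hilbert spaces of the same dimension). The statement is true --- for instance, decompose $\pi$ by SNAG as a direct integral of characters $(s,\gamma)$ of $\dgrp\times\grp$ and absorb each fibre by conjugating with $\mul_\gamma\lreg_s$, measurably in $(s,\gamma)$ --- but an argument has to be supplied. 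The paper sidesteps this direction entirely: given $u\in\FSalg[\dgrp\times\grp]$ it takes the classical Schur multiplier $\overline{S}$ that $u$ defines on $\mathcal{B}(L^2(\grp)\otimes L^2(\grp))$ and verifies by a one-line computation that its restriction is a Herz--Schur multiplier of $\actG$ commuting with the $\falg[\dgrp]$-action. In summary: your route to the equivalence of (i) and (ii) is a legitimate variant of the paper's, but the identification with $\FSalg[\dgrp\times\grp]$ --- in both directions --- is not yet a proof as written.
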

\begin{proof}
Observe that under the identifications of each crossed product with $\Bd[\Ltwo]$ the module action $\cdot$ of $\falg$ on $\vNcros{\vN}{\dgrp}{\coactG}$ (see (\ref{eq:modactionliftedtocross})) is carried to the action $*$ on $\vNcros{\vN[\dgrp]}{\grp}{\actG}$ of Remark~\ref{re:HSmultnotes}(iv), and the corresponding statement holds for the module actions of $\falg[\dgrp]$.
Assume (i) holds, take $u \in \falg$, $r \in \grp$ and $\gamma \in \dgrp$, and write $\mul_\gamma$ for the multiplication operator on $\Ltwo$ associated to $\gamma \in \dgrp$. Then
\[
\begin{split}
    S^{\coactG} \big( u \cdot \picoact[\coactG](\lreg_r) \mul_\gamma  \big) &= S \big( u(r) \lreg_r \mul_\gamma \big) = \overline{\dualp{\gamma}{r}} S \big( u(r) \mul_\gamma \lreg_r \big) \\
        &= \overline{\dualp{\gamma}{r}} S^{\actG} \big( u * \piact[\actG](\mul_\gamma) \lrega_r \big) = \overline{\dualp{\gamma}{r}} u * S^{\actG} \big( \piact[\actG](\mul_\gamma) \lrega_r \big) \\
        &= \overline{\dualp{\gamma}{r}} \dualp{\gamma}{r} u \cdot S^{\coactG} \big( \picoact[\coactG](\lreg_r) \mul_\gamma \big) = u \cdot S^{\coactG} \big( \picoact[\coactG](\lreg_r) \mul_\gamma \big) ,
\end{split}
\]
so $S$ defines an $\falg$-module map on $\vNcros{\vN}{\dgrp}{\coactG}$.
Similarly we calculate that $S^{\coactG}(v * \picoact[\coactG](\lreg_r) \mul_\gamma ) = v * S^{\coactG} ( \picoact[\coactG](\lreg_r) \mul_\gamma )$ for each $v \in \falg[\dgrp]$, so $S^{\coactG}$ is a Herz--Schur multiplier of the action $\dcoact$ by Remark~\ref{re:HSmultnotes}(iii).
We have now shown that (i) implies (ii); by Pontryagin duality the same proof shows (ii) implies (i).


Now let $S$ satisfy (i); if $F$ denotes the symbol of $S^{\actG}$ then, for any $r \in \grp,\ \gamma \in \dgrp$, $F(r)$ is an $\falg[\dgrp]$-module map, and therefore a Herz--Schur multiplier of $\dgrp$, so we identify $F$ with a map $\dgrp \times \grp \to \CC$.
Consider the Schur multiplier $\overline{S^{\actG}}$; it will be convenient to regard $\overline{S^{\actG}}$ as acting on $\Linf \btens \Bd[\Ltwo[\grp]]$. 
The restriction of $\overline{S^{\actG}}$ to $\Linf \btens \vN$ is a completely bounded, weak*-continuous map; to see that it preserves $\Linf \btens \vN$ we calculate, using the modularity of $\overline{S^{\actG}}$,
\[
\begin{split}
    \overline{S^{\actG}} (\mul_\gamma \otimes \lreg_r) &= (1 \otimes \mul_{\gamma\inv}) \overline{S^{\actG}} (\mul_\gamma \otimes \mul_\gamma \lreg_r) \\
        &= (1 \otimes \mul_{\gamma\inv}) \Phi\inv \big( (S^{\actG} \otimes \id) (\mul_\gamma \otimes \mul_\gamma \lreg_r \otimes \lreg_r ) \big) \\
        &= (1 \otimes \mul_{\gamma\inv}) \Phi\inv \big( F(\gamma , r) \mul_\gamma \otimes \mul_\gamma \lreg_r \otimes \lreg_r) \big) \\
        &= F(\gamma , r) (\mul_\gamma \otimes \lreg_r) .
\end{split}
\]
From this calculation we also see that this restriction is an $\falg[\dgrp \times \grp]$-module map on $\Linf \btens \vN$, and therefore a Herz--Schur multiplier.

Conversely, if $S \in \FSalg[\dgrp \times \grp]$, with symbol $u : \dgrp \times \grp \to \CC$, consider the associated Schur multiplier $\overline{S}$ acting on $\Bd[\Ltwo[\grp] \otimes \Ltwo]$.
The restriction of $\overline{S}$ to $\vNcros{\Linf}{\grp}{\actG}$ is a Herz--Schur multiplier of $\actG$, since
\[
    \overline{S} \left( \piact[\actG](\mul_\gamma) \lrega_r \right) = \overline{S} (\mul_\gamma \otimes \mul_\gamma \lreg_r) = (1 \otimes \mul_\gamma) \overline{S} (\mul_\gamma \otimes \lreg_r) = u(\gamma ,  r) \piact[\actG](\mul_\gamma) \lrega_r .
\]
That $\overline{S}$ commutes with the $\falg[\dgrp]$-module action also follows easily.
\end{proof}


\bibliography{multipliersduality}

\end{document}